\documentclass[11pt, reqno]{amsproc} 

\usepackage{geometry}
\usepackage{color}
\usepackage{verbatim}
\geometry{letterpaper}

\usepackage[utf8x]{inputenc}
\usepackage{t1enc}
\usepackage[english]{babel}

\usepackage{amsmath,amssymb,amsthm}
\usepackage{mathrsfs,esint}
\usepackage{graphicx,wrapfig}
\usepackage[format=hang]{caption}

\newcommand*{\R}{{\mathbb R}}

\newcommand*{\N}{{\mathbb N}}

\newcommand*{\eps}{\varepsilon}

\providecommand*{\vint}[1]{\mathchoice
          {\mathop{\vrule width 5pt height 3 pt depth -2.5pt
                  \kern -9pt \kern 1pt\intop}\nolimits_{\kern -5pt{#1}}}
          {\mathop{\vrule width 5pt height 3 pt depth -2.6pt
                  \kern -6pt \intop}\nolimits_{\kern -3pt{#1}}}
          {\mathop{\vrule width 5pt height 3 pt depth -2.6pt
                  \kern -6pt \intop}\nolimits_{\kern -3pt{#1}}}
          {\mathop{\vrule width 5pt height 3 pt depth -2.6pt
                  \kern -6pt \intop}\nolimits_{\kern -3pt{#1}}}}
\newcommand*{\jint}{\fint}

\DeclareMathOperator{\Mod}{Mod}

\numberwithin{equation}{section}
\theoremstyle{plain}
\newtheorem{thm}[equation]{Theorem}

\newtheorem{lem}[equation]{Lemma}

\theoremstyle{definition}

\newtheorem{remark}[equation]{Remark}

\begin{document}

\title[Homogeneous Newton-Sobolev spaces]{Homogeneous Newton-Sobolev spaces in metric measure spaces and their Banach space properties.} 
\author{Nageswari Shanmugalingam}
\address{Department of Mathematical Sciences, P.O.~Box 210025, University of Cincinnati, Cincinnati, OH~45221-0025, U.S.A.}
\email{shanmun@uc.edu}
\thanks{
 N.S.'s work is partially supported by the NSF (U.S.A.) grant DMS~\#2054960.}
\maketitle

\begin{abstract}
In this note we prove the Banach space properties of the homogeneous Newton-Sobolev spaces 
$HN^{1,p}(X)$
of functions on an unbounded
metric measure space $X$ equipped with a doubling measure supporting a $p$-Poincar\'e inequality, and show
that when $1<p<\infty$, 
even with the lack of global $L^p$-integrability of functions in $HN^{1,p}(X)$, we have that every bounded sequence in
$HN^{1,p}(X)$ has a strongly convergent convex-combination subsequence.
The analogous properties for the inhomogeneous Newton-Sobolev classes $N^{1,p}(X)$ are proven
elsewhere in existing literature (as for example in~\cite{Sh1, HKSTbook}).
\end{abstract}

\noindent
    {\small \emph{Key words and phrases}: upper gradients, homogeneous Newton-Sobolev spaces, doubling measure,
Poincar\'e inequality, reflexivity.
}

\medskip

\noindent
    {\small \emph{Mathematics Subject Classification (2020):}
Primary: 46E36.
Secondary: 31E05, 31L15.
}

\section{Introduction}

The Sobolev spaces play a vital role in the study of partial differential equations and potential theory in 
the setting of Euclidean spaces, and more generally, Riemannian manifolds. The research carried out during
the past two decades also developed function spaces, in the setting of more general metric measure spaces,
that can play the role of Sobolev spaces in the setting where the metric space is non-smooth and carries
no natural differential structure. One such class of function-spaces, called the Newton-Sobolev spaces,
have turned out to be highly useful in developing potential theory and studying properties of energy-minimizers.
The homogeneous versions of these spaces is the focus of the present paper.

Homogeneous versions of Newton-Sobolev spaces, where the energy of the function in that version is globally
finite without the function itself being necessarily in the global $L^p$-class, arise naturally when dealing with 
large-scale potential theory of a metric measure space; see for example~\cite{F, Holo, HoloKos, N, BCS}.
For this reason, the behavior of such functions has attracted great interest recently, and an interesting discussion
about limits at infinity of such functions can be found in the metric space context in~\cite{KNW, KN, EKN, KKN}.

As with classical Sobolev spaces, the Banach space property is vital in the use of Newton-Sobolev spaces.
The normed vector-space property of these function spaces is clear.
It was shown in~\cite{Sh1} (see also~\cite{HKSTbook}) that the Newton-Sobolev spaces $N^{1,p}(X)$, with
$1\le p<\infty$ and $(X,d,\mu)$ the non-smooth metric measure space on which functions from $N^{1,p}(X)$
act, are always Banach spaces, that is, as normed vector spaces, $N^{1,p}(X)$ are complete. The situation of
homogeneous Newton-Sobolev spaces $D^{1,p}(X)$ is different; even the norm on $D^{1,p}(X)$ is only a
seminorm. In this note we will establish the 
Banach space property for $HN^{1,p}(X):=D^{1,p}(X)/\sim$, where $\sim$ is an equivalence relationship on $D^{1,p}(X)$ that
ensures that the seminorm on $D^{1,p}(X)$ transforms into a norm on $HN^{1,p}(X)$.

\begin{thm}\label{thm:main}
If the measure $\mu$ of the metric measure space $(X,d,\mu)$ is doubling and supports a $p$-Poincar\'e inequality,
then $HN^{1,p}(X)$ is a Banach space. If $1<p<\infty$, then any bounded sequence in $HN^{1,p}(X)$ has a convex-combination
subsequence that converges in $HN^{1,p}(X)$.
\end{thm}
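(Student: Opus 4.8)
The plan is to prove the two assertions separately, in each case deriving global statements from ball‑by‑ball information. Two tools will be used repeatedly: the \emph{closure property of $p$‑weak upper gradients} — if $f_j\to f$ in $L^p_{\loc}(X)$ and each $h_j$ is a $p$‑weak upper gradient of $f_j$ with $h_j\to h$ in $L^p(X)$, then $h$ is a $p$‑weak upper gradient of $f$ (see~\cite{Sh1,HKSTbook}) — and the locality of the minimal $p$‑weak upper gradient $g_f$. Moreover, the doubling property and the $p$‑Poincar\'e inequality imply that $X$ is connected and that, fixing a ball $B_0$ and choosing in each class the representative with mean value $u_{B_0}=0$, one has a chaining estimate $\|u\|_{L^p(B)}\le C(B,B_0)\,\|g_u\|_{L^p(X)}$ for every ball $B$ and every $u\in D^{1,p}(X)$ with $u_{B_0}=0$. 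For completeness, given a Cauchy sequence $([u_n])$ with such representatives, I would first note that $\|g_{u_n}-g_{u_m}\|_{L^p(X)}\le\|g_{u_n-u_m}\|_{L^p(X)}=\|[u_n]-[u_m]\|_{HN^{1,p}(X)}\to0$, so $g_{u_n}\to\gamma$ in $L^p(X)$; the chaining estimate applied to $u_n-u_m$ then gives that $(u_n)$ is Cauchy in $L^p(B)$ for every ball $B$, hence $u_n\to u$ in $L^p_{\loc}(X)$; by the closure property $\gamma$ is a $p$‑weak upper gradient of $u$, so $u\in D^{1,p}(X)$, and applying the closure property once more to $u_m-u_n\to u-u_n$ gives $\|g_{u-u_n}\|_{L^p(X)}\le\lim_m\|g_{u_m-u_n}\|_{L^p(X)}\to0$. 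Thus $[u_n]\to[u]$, so $HN^{1,p}(X)$ is a Banach space; this argument works for every $1\le p<\infty$.

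For the second assertion assume $1<p<\infty$ and let $([u_n])$ be bounded, $\|g_{u_n}\|_{L^p(X)}\le M$, with representatives normalized as above; by the chaining estimate $(u_n)$ is bounded in $N^{1,p}(B_i)$ for each ball in a fixed exhaustion $B_1\subset B_2\subset\cdots$ of $X$. The first step is to localize: on each $B_i$ I would invoke the convex‑combination compactness (reflexivity) of the Newtonian space $N^{1,p}(B_i)$, the analogue on balls of the known bounded‑domain statement (see~\cite{Sh1,HKSTbook}), and then diagonalize over $i$, using that a convex combination of convex combinations of the $u_n$ is again such, and that passing to convex combinations with indices tending to infinity preserves the convergences already obtained. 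This yields a single sequence $v_k=\sum_n a_{k,n}u_n$ of finite convex combinations, with $a_{k,n}=0$ for $n<k$, such that $v_k\to v$ in $N^{1,p}(B_i)$ for every $i$, for some $v$ with $v|_{B_i}\in N^{1,p}(B_i)$ for all $i$.

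The second step is where $1<p<\infty$ is essential. Since $g_{v_k}\le\sum_n a_{k,n}g_{u_n}$ is bounded in the reflexive space $L^p(X)$, after passing to a subsequence $g_{v_k}\rightharpoonup\gamma$ weakly in $L^p(X)$, and Mazur's lemma furnishes convex combinations $\hat v_m=\sum_k b_{m,k}v_k$ (still finite convex combinations of the $u_n$, with coefficients supported on indices $\ge m$, and still converging to $v$ in each $N^{1,p}(B_i)$) such that $\Gamma_m:=\sum_k b_{m,k}g_{v_k}\to\gamma$ \emph{strongly} in $L^p(X)$. Each $\Gamma_m$ is a $p$‑weak upper gradient of $\hat v_m$ on $X$, so the closure property gives that $\gamma$ is a $p$‑weak upper gradient of $v$; in particular $v\in D^{1,p}(X)$ with $\|g_v\|_{L^p(X)}\le\|\gamma\|_{L^p(X)}$.

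It then remains to show $[\hat v_m]\to[v]$ in $HN^{1,p}(X)$, i.e.\ $\|g_{\hat v_m-v}\|_{L^p(X)}\to0$, and this is the step I expect to be the main obstacle, because strong convergence on balls together with merely bounded global energy does not by itself prevent mass from escaping to infinity. The idea is to split the integral at $B_i$. On $B_i$, by locality $\int_{B_i}g_{\hat v_m-v}^p$ equals the $p$‑th power of the $L^p(B_i)$‑norm of the minimal $p$‑weak upper gradient of $\hat v_m-v$ computed in $B_i$, which tends to $0$ as $m\to\infty$ by the $N^{1,p}(B_i)$‑convergence. On the complement, using $g_{\hat v_m-v}\le g_{\hat v_m}+g_v\le\Gamma_m+g_v$,
\[
\int_{X\setminus B_i}g_{\hat v_m-v}^p\le 2^{p-1}\Big(\int_{X\setminus B_i}\Gamma_m^p+\int_{X\setminus B_i}g_v^p\Big);
\]
here $\int_{X\setminus B_i}g_v^p\to0$ as $i\to\infty$ since $g_v\in L^p(X)$, and $\sup_m\int_{X\setminus B_i}\Gamma_m^p\to0$ as $i\to\infty$ because $\Gamma_m\to\gamma$ strongly in $L^p(X)$ forces $\Gamma_m^p\to\gamma^p$ in $L^1(X)$ (using $p<\infty$), and an $L^1(X)$‑convergent sequence has uniformly small tails along the exhaustion. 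Hence, given $\eps>0$, choosing first $i$ (to make the $X\setminus B_i$ contribution less than $\eps$ uniformly in $m$) and then $m$ large (to make the $B_i$ contribution less than $\eps$) yields $\|g_{\hat v_m-v}\|_{L^p(X)}^p<2\eps$. Since $(\hat v_m)$ is a convex‑combination subsequence of $(u_n)$, this completes the plan. The crux, and the reason $p>1$ is needed, is that Mazur's lemma applied to the upper gradients turns bounded global energy into upper gradients that converge \emph{strongly}, and it is exactly the resulting uniform smallness of the tails $\int_{X\setminus B_i}\Gamma_m^p$ that rules out escaping mass.
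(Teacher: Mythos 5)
Your plan is correct, and its overall architecture matches the paper's: fix a ball $B_0$, normalize representatives to have zero mean on $B_0$, use the $(p,p)$-Poincar\'e inequality (the paper's Lemma~\ref{lem:useful1}) to convert global energy bounds into $L^p(B(x_0,R))$- and $N^{1,p}(B(x_0,R))$-control, invoke the known completeness and convex-combination compactness of $N^{1,p}$ on balls, and glue via an exhaustion. Two differences are worth recording. For completeness, the paper passes to a rapidly convergent subsequence and works with the explicit upper gradients $\sum_{j\ge k} g_{v_{j+1}-v_j}$ together with the pointwise-a.e.\ Cauchy statement of Lemma~\ref{lem:ptwise-Banach}, whereas you use minimal $p$-weak upper gradients, the pointwise bound $|g_{u_n}-g_{u_m}|\le g_{u_n-u_m}$, and the closure (lower semicontinuity) property of weak upper gradients; both routes are standard and the choice is a matter of taste. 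The substantive difference is in the second claim. The paper produces convex combinations converging in $N^{1,p}(B(x_0,R))$ for every $R$, diagonalizes, and then concludes; as you correctly point out, convergence in every $N^{1,p}(B_i)$ does not by itself give $\Vert g_{\hat{v}_m-v}\Vert_{L^p(X)}\to 0$, since gradient mass could escape to infinity along the exhaustion. Your extra step --- applying Mazur's lemma to the bounded, hence weakly convergent, upper gradients in $L^p(X)$ to obtain dominating gradients $\Gamma_m\to\gamma$ strongly, deducing uniformly small tails $\sup_m\int_{X\setminus B_i}\Gamma_m^p\to 0$, and splitting the energy integral at $B_i$ --- is precisely what is needed to upgrade the local convergence to convergence in $HN^{1,p}(X)$, and it isolates exactly where $1<p<\infty$ is used. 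In this respect your argument is more complete than the paper's own writeup of the second claim, which leaves this local-to-global passage implicit; the remaining ingredients you defer to the literature (locality of minimal $p$-weak upper gradients on open sets, stability of convex combinations under further convex combinations with indices tending to infinity) are standard and correctly invoked.
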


Here by a convex combination subsequence of a sequence $(u_k)_k$ we mean a sequence of the form
$(w_k)_k$ such that for each $k\in\N$ there are nonnegative real numbers $\lambda_{j,k}$, $j=1,\cdots, N_k$
with $\sum_{j=0}^{N_k}\lambda_{j,k}=1$ and  $w_k=\sum_{j=0}^{N_k}\lambda_{j,k}\, u_{k+j}$. Should $HN^{1,p}(X)$ be 
reflexive, then by Mazur's lemma the second claim of the theorem would follow from the first part of the theorem; 
the proof that $HN^{1,p}(X)$ is reflexive follows along the same lines as the proof the fact that $N^{1,p}(X)$ is reflexive
under the hypotheses of the above theorem, with added complications due to lack of global $L^p$-integrability of the functions
in $HN^{1,p}(X)$; see for example the discussion in~\cite{HKSTbook}, which uses the tools developed in~\cite{Chee}. 
However, the consequence of Mazur's lemma does not need the reflexivity property of $HN^{1,p}(X)$, and we provide a
more direct proof here.

\section{Background}

The classical Sobolev spaces are based on the notion of weak derivatives, where we expect to have a vector-valued
function associated with a Sobolev function so that an integration-by-parts formula holds. 
The comprehensive text~\cite{Maz} gives a wide-ranging overview of the theory of Sobolev spaces in Euclidean setting, while
the lecture note~\cite{Vaisala}
gives a connection between classical Sobolev functions and functions that are absolutely continuous on almost every
curve in the Euclidean space, and so for almost every curve, the function should be differentiable almost everywhere
along that curve. This is the idea behind the notion of upper gradients, first proposed by Heinonen and Koskela in~\cite{HK},
see also~\cite{Hei}. 

In this note, $(X,d,\mu)$ is a metric measure space with $\mu$ a Radon measure such that for each ball $B\subset X$ we have
that $0<\mu(B)<\infty$. We say that $\mu$ is \emph{doubling} if there is a constant $C\ge 1$ such that
\[
\mu(B(x,2r))\le C\, \mu(B(x,r)),
\]
whenever $x\in X$ and $r>0$. Here $B(x,r):=\{y\in X\, :\, d(x,y)<r\}$ denotes the ball centered at $x$ and with radius $r$.
We will also fix $1\le p<\infty$ here.

\subsection{Construction of Newton-Sobolev spaces $N^{1,p}(X)$}
A function $f:X\to [-\infty,\infty]$ is said to have a Borel function $g:X\to[0,\infty]$ as an \emph{upper gradient} if we have
\[
|f(\gamma(b))-f(\gamma(a))|\le \int_\gamma g\, ds
\]
whenever $\gamma:[a,b]\to X$ is a rectifiable curve with $a<b$. In the above, we interpret the inequality as also
meaning that $\int_\gamma g\, ds=\infty$ whenever at least one of $f(\gamma(b))$, $f(\gamma(a))$ is not finite.
We say that $f\in \widetilde{N^{1,p}}(X)$ for a fixed choice of $1\le p<\infty$ if 
\[
\Vert f\Vert_{N^{1,p}(X)}:=\left(\int_X|f|^p\, d\mu\right)^{1/p}+\inf_g\left(\int_Xg^p\, d\mu\right)^{1/p}
\]
is finite. In the above, the infimum is over all upper gradients $g$ of $f$. 
It is not difficult to see that $\widetilde{N^{1,p}}(X)$ is a vector space, and that if $g_1$ is an upper gradient of $f_1$ and
$g_2$ is an upper gradient of $f_2$, then $|\alpha|g_1+|\beta|g_2$ is an upper gradient of $\alpha f_1+\beta f_2$ when
$\alpha,\beta\in\R$.

Note that it is possible for a function
to be not identically zero on $X$ but at the same time have $\Vert f\Vert_{N^{1,p}(X)}=0$. This is seen, for
example, in the context of Euclicean spaces, with $f=\chi_{\{0\}}$, see the discussion in~\cite{HKSTbook}. Therefore,
to set $\Vert \cdot\Vert_{N^{1,p}(X)}$ to be a norm, we deal with a quotient space. To this end, when $f_1,f_2\in \widetilde{N^{1,p}}(X)$,
we say that $f_1\sim f_2$ if $\Vert f_1-f_2\Vert_{N^{1,p}(X)}=0$.

We say that a family $\Gamma$ of rectifiable curves in $X$ has \emph{$p$-modulus zero}, that is, $\Mod_p(\Gamma)=0$, if
there is a non-negative Borel function $\rho$ on $X$ such that $\int_X\rho^p\, d\mu$ is finite, and at the same time,
$\int_\gamma\rho\, ds=\infty$ for each $\gamma\in \Gamma$. It follows from~\cite{HKSTbook} that the countable union of
families of curves with zero $p$-modulus must have $p$-modulus zero.

With $E\subset X$, the collection $\Gamma_E$ is the family of all non-constant compact rectifiable curves in $X$ that intersect $E$.
The collection $\Gamma_E^+$ is the family of all non-constant compact rectifiable curves $\gamma$ in $X$ for which 
$\mathcal{H}^1(\gamma_s^{-1}(E))>0$, where $\gamma_s:[0,\ell(\gamma)]\to X$ is the arc-length parametrization of $\gamma$.
Here, $\ell(\gamma)$ is the length of the rectifiable curve $\gamma$. Note that when $E$ is a Borel set with $\mu(E)=0$, necessarily
we must have that $\Mod_p(\Gamma_E^+)=0$. However, it is still possible to have $\Mod_p(\Gamma_E)$ to be positive.

Following Ohtsuka, we say that a set $E\subset X$ is \emph{$p$-exceptional} if both $\mu(E)=0$ and $\Mod_p(\Gamma_E)=0$
are satisfied.

\begin{lem}\label{lem:p-Except}
If $f_1, f_2\in \widetilde{N^{1,p}}(X)$, then $f_1\sim f_2$ if and only if the set $E:=\{x\in X\, :\, f_1(x)\ne f_2(x)\}$ is $p$-exceptional.
\end{lem}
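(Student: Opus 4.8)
The plan is to work throughout with the single function $u:=f_1-f_2$, which lies in $\widetilde{N^{1,p}}(X)$ since this class is a vector space, and to use that, by definition, $f_1\sim f_2$ precisely when both $\int_X|u|^p\,d\mu=0$ and $\inf_g\int_Xg^p\,d\mu=0$, the infimum being over all upper gradients $g$ of $u$. Since $f_1,f_2$ are $p$-th power integrable they are finite $\mu$-a.e., and a short case check (handling by the usual convention the $\mu$-null set where either function is infinite) shows that the set $E=\{f_1\ne f_2\}$ coincides with $\{u\ne 0\}$; I would record this identification at the outset, as both directions use it.

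For the \emph{if} direction, assume $E$ is $p$-exceptional. From $\mu(E)=0$ we get $u=0$ $\mu$-a.e., hence $\int_X|u|^p\,d\mu=0$. From $\Mod_p(\Gamma_E)=0$ we obtain a nonnegative Borel function $\rho$ with $\int_X\rho^p\,d\mu<\infty$ and $\int_\gamma\rho\,ds=\infty$ for every $\gamma\in\Gamma_E$. I claim that for every $\eps>0$ the function $\eps\rho$ is an upper gradient of $u$: given a non-constant compact rectifiable curve $\gamma:[a,b]\to X$, if $\gamma$ meets $E$ then $\gamma\in\Gamma_E$, so $\int_\gamma\eps\rho\,ds=\infty$ and the upper-gradient inequality is trivial; if $\gamma$ avoids $E=\{u\ne 0\}$ then $u(\gamma(a))=u(\gamma(b))=0$, so the inequality again holds. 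Hence $\inf_g\int_Xg^p\,d\mu\le\eps^p\int_X\rho^p\,d\mu$ for all $\eps>0$, forcing $\inf_g\int_Xg^p\,d\mu=0$, so $\Vert u\Vert_{N^{1,p}(X)}=0$ and $f_1\sim f_2$.

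For the \emph{only if} direction, assume $f_1\sim f_2$. Then $\int_X|u|^p\,d\mu=0$ gives $u=0$ $\mu$-a.e., so $\mu(E)=\mu(\{u\ne0\})=0$; it remains to show $\Mod_p(\Gamma_E)=0$. Using $\inf_g\int_Xg^p\,d\mu=0$, choose upper gradients $g_k$ of $u$ with $\int_Xg_k^p\,d\mu\le 2^{-kp}$ and set $\rho:=\sum_kg_k$, so that $\Vert\rho\Vert_{L^p(X)}\le\sum_k2^{-k}<\infty$ and the family $\Gamma_0$ of curves $\gamma$ with $\int_\gamma\rho\,ds=\infty$ has $\Mod_p(\Gamma_0)=0$. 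Fix a non-constant compact rectifiable $\gamma\notin\Gamma_0$: since $g_k\le\rho$ and $\int_\gamma\rho\,ds<\infty$, each $\int_\gamma g_k\,ds$ is finite, so applying the upper-gradient inequality for $g_k$ to $\gamma$ and to all of its subcurves shows $u$ is finite along $\gamma$ and $|u(\gamma(t))-u(\gamma(s))|\le\int_\gamma g_k\,ds$ for all parameters $s,t$; letting $k\to\infty$ (the right side is a tail of the convergent series $\sum_k\int_\gamma g_k\,ds=\int_\gamma\rho\,ds$) shows $u\circ\gamma$ is constant. On the other hand, since $\mu(F)=0$ for $F:=\{u\ne0\}$, the family $\Gamma_F^+$ has $\Mod_p(\Gamma_F^+)=0$, and for $\gamma\notin\Gamma_F^+$ we have $u\circ\gamma_s=0$ $\mathcal{H}^1$-a.e. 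Thus if a non-constant $\gamma$ lies outside $\Gamma_0\cup\Gamma_F^+$, then $u\circ\gamma$ is a constant equal to its $\mathcal{H}^1$-a.e. value $0$ (here using $\ell(\gamma)>0$), so $u\equiv0$ on $\gamma$ and $\gamma\notin\Gamma_E$. Hence $\Gamma_E\subseteq\Gamma_0\cup\Gamma_F^+$, and by the countable subadditivity of $p$-modulus recorded in the Background, $\Mod_p(\Gamma_E)=0$.

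The only genuinely delicate point is the forward direction: one must route through the two exceptional families $\Gamma_0$ and $\Gamma_F^+$ and combine ``constant along $\gamma$'' with ``zero $\mathcal{H}^1$-a.e. along $\gamma$'' to conclude ``identically zero along $\gamma$'', while being careful that the upper-gradient inequality applies to subcurves and that the convention for infinite endpoint values does not interfere. The reduction to $u$, the identification $E=\{u\ne0\}$, and the entire \emph{if} direction are routine.
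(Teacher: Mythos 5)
Your proof is correct and follows essentially the same route as the paper: the ``if'' direction is identical (scaling an admissible $\rho$ for $\Gamma_E$ by $\eps$), and the ``only if'' direction covers $\Gamma_E$ by $\Gamma_E^+$ together with the family of curves on which $\rho=\sum_n g_n$ has infinite line integral. The only cosmetic difference is that where the paper decomposes $E=\bigcup_k E_k$ with $E_k=\{|f_1-f_2|>1/k\}$ and bounds $\int_\gamma g_n\,ds\ge 1/k$ from below to force $\int_\gamma\rho\,ds=\infty$, you argue the contrapositive --- $\int_\gamma\rho\,ds<\infty$ forces $\int_\gamma g_n\,ds\to 0$, hence $u\circ\gamma$ is constant and therefore zero --- which is logically equivalent and lets you skip the $E_k$ decomposition.
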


\begin{proof}
Let $f_1,f_2\in \widetilde{N^{1,p}}(X)$. If $E:=\{x\in X\, :\, f_1(x)\ne f_2(x)\}$ satisfies $\mu(E)=0$, then we already 
have that $\int_X|f_1-f_2|^p\, d\mu=0$. If in addition we have that $\Mod_p(\Gamma_E)=0$, then with $\rho$ the 
Borel function associated with this property for $\Gamma_E$, we see that for each $\eps>0$ the function
$g_\eps:=\eps \rho$ is an upper gradient of $f_1-f_2$, and $\int_Xg_\eps^p\, d\mu=\eps^p\, \int_X\rho^p\, d\mu$. As
$\int_X\rho^p\, d\mu<\infty$, it follows that $\inf_g\int_Xg^p\, d\mu=0$ where the infimum is over all upper gradients $g$ of 
$f_1-f_2$.

Now suppose that $f_1\sim f_2$. Then we must have that $\int_X|f_1-f_2|^p\, d\mu=0$, and so $\mu(E)=0$. 
Moreover, for each positive integer $n$ we can find an upper gradient $g_n$ of $f_1-f_2$ such that $\int_Xg_n^p\, d\mu<2^{-np}$. Set
$\rho_0=\sum_ng_n$. For each positive integer $k$ let $E_k=\{x\in X\, :\, |f_1(x)-f_2(x)|>1/k\}$. Then $E=\bigcup_kE_k$.

By the summability of $\sum_n2^{-n}$, we know that $\int_X\rho_0^p\, d\mu<\infty$. Since $\mu$ is
a Radon measure, there is a Borel set $E_0\subset X$ with $E\subset E_0$ such that $\mu(E_0)=0$; let $\rho=\rho_0+\infty\chi_{E_0}$.
Then again we have that $\int_X\rho^p\, d\mu=\int_X\rho_0^p\, d\mu<\infty$, and moreover, if $\gamma\in\Gamma_E^+$,
then $\int_\gamma\rho\, ds=\infty$. On the other hand, if $\gamma\in\Gamma_{E_k}\setminus\Gamma_E^+$, then there is some
point along the trajectory of $\gamma$ at which $f_1-f_2$ takes on the value of $0$, and there is some point in this trajectory where
$|f_1-f_2|$ is at least $1/k$. It follows that for each positive integer $n$ we have $\int_\gamma g_n\ge 1/k$, and so for each
such $\gamma$ we have that $\int_\gamma k\rho\, ds=\infty$. Thus $\Mod_p(\Gamma_{E_k}\cup\Gamma_E^+)=0$.
As the countable union of families of curves, each of zero $p$-modulus, must be of $p$-modulus zero, it follows that
$\Gamma_E$ is of zero $p$-modulus. This completes the proof of the lemma.
\end{proof}

The \emph{Newton-Sobolev class} is the collection of equivalence classes, that is, $N^{1,p}(X):=\widetilde{N^{1,p}}(X)/\sim$. 
Given the above lemma, we see that when $f_1\sim f_2$,
we have $\Vert f_1\Vert_{N^{1,p}(X)}=\Vert f_2\Vert_{N^{1,p}(X)}$, and so the seminorm $\Vert\cdot\Vert_{N^{1,p}(X)}$ is inherited by
$N^{1,p}(X)$ as a norm. Indeed, with $E$ the set of points $x\in X$ for which $f_1(x)\ne f_2(x)$, we have that $E$ is $p$-exceptional,
and so there is some non-negative Borel function $\rho\in L^p(X)$ such that $\int_\gamma\rho\, ds=\infty$ for each 
$\gamma$ that intersects $E$; hence, for each $\eps>0$ we have that $\eps\rho$ is an upper gradient of $f_1-f_2$, and 
so whenever $g_1, g_2$ are upper gradients of $f_1, f_2$ respectively, then $g_i+\eps\rho$ is an upper gradient
of $f_j$ for $i, j\in\{1,2\}$ with $i\ne j$. Thus $\Vert f_1\Vert_{N^{1,p}(X)}=\Vert f_2\Vert_{N^{1,p}(X)}$.

\begin{lem}[{\cite[Theorem~3.7]{Sh1},\cite[Proposition~7.3.1, Proposition~7.2.8]{HKSTbook}}]\label{lem:ptwise-Banach}
The space $N^{1,p}(X)$ is a Banach space.
Suppose that $(u_k)_k$ is a Cauchy sequence of functions in $N^{1,p}(X)$, converging to $u\in N^{1,p}(X)$. 
Let $(u_{k_j})_j$ be any subsequence of this sequence satisfying $\Vert u_{k_j}-u_{k_{j+1}}\Vert\le C\, 2^{-j}$ with $C>0$ that is
independent of $j$. Then, with $u_{k_j}$ also denoting a representative
function from the equivalence class $u_{k_j}\in N^{1,p}(X)$, and setting $E$ to be the collection of all points $x\in X$ for which
$(u_{k_j}(x))_j$ is not a Cauchy sequence of real numbers, we have that $E$ is a $p$-exceptional set.
\end{lem}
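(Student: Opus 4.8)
The plan is to run the classical completeness argument for Newtonian spaces while keeping track of where the approximating functions can fail to be pointwise Cauchy; neither doubling nor the Poincar\'e inequality is needed for this statement. We work with an arbitrary Cauchy sequence $(u_k)_k$ together with a subsequence $(u_{k_j})_j$ satisfying $\Vert u_{k_j}-u_{k_{j+1}}\Vert_{N^{1,p}(X)}\le C2^{-j}$ (such a subsequence exists for any Cauchy sequence, and is the data of the second assertion). Write $v_j$ for a representative of $u_{k_j}$ and $w_j:=v_{j+1}-v_j$; choose for each $j\ge 1$ a Borel upper gradient $g_j$ of $w_j$ with $\Vert g_j\Vert_{L^p(X)}\le(C+1)2^{-j}$ (possible since $\inf_g\Vert g\Vert_{L^p(X)}\le\Vert u_{k_j}-u_{k_{j+1}}\Vert_{N^{1,p}(X)}\le C2^{-j}$), and let $g_0\in L^p(X)$ be a Borel upper gradient of $v_1$. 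Put $\rho_0:=\sum_{j\ge 0}g_j$; by Minkowski's inequality for partial sums together with monotone convergence, both $\rho_0$ and $h:=\sum_{j\ge 1}|w_j|$ lie in $L^p(X)$. Let $A:=\{x:\ |v_1(x)|<\infty\text{ and }h(x)<\infty\}$; then $\mu(X\setminus A)=0$, the numerical sequence $(v_j(x))_j$ converges in $\R$ for each $x\in A$, and so $A\subseteq X\setminus E$ and $\mu(E)=0$. Finally fix a Borel set $N\supseteq X\setminus A$ with $\mu(N)=0$, possible since $\mu$ is Radon.

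The crux is to promote $\mu(E)=0$ to $\Mod_p(\Gamma_E)=0$, which I would do curve by curve with the Borel weight $\rho:=\rho_0+\infty\,\chi_N$, noting that $\Vert\rho\Vert_{L^p(X)}=\Vert\rho_0\Vert_{L^p(X)}<\infty$. Let $\gamma$ be a non-constant compact rectifiable curve with $\int_\gamma\rho\, ds<\infty$. Then $\mathcal{H}^1(\gamma_s^{-1}(N))=0$, so $\gamma$ runs through $X\setminus N\subseteq A$ along a set of full length; pick $\gamma(a)\in A$. Also $\int_\gamma\rho_0\, ds<\infty$. Applying the upper gradient inequality on the subcurve from $\gamma(a)$ to $\gamma(t)$ to $v_1$ (via $g_0$) shows $v_1(\gamma(t))$ is finite, and applying it to each $w_j$ (via $g_j$) and summing in $j$ gives
\[
\sum_{j\ge 1}|w_j(\gamma(t))|\ \le\ h(\gamma(a))+\int_\gamma\rho_0\, ds\ <\ \infty ,
\]
so $(v_j(\gamma(t)))_j$ converges in $\R$; this holds for every parameter $t$, hence $\gamma$ does not meet $E$. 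Contrapositively, $\int_\gamma\rho\, ds=\infty$ for every $\gamma\in\Gamma_E$, so $\Mod_p(\Gamma_E)=0$, which together with $\mu(E)=0$ is precisely the second assertion that $E$ is $p$-exceptional.

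For the Banach-space property, set $u:=\liminf_j v_j$; this is measurable and agrees with $\lim_j v_j$ on $X\setminus E$, hence $\mu$-a.e. As $(v_j)_j$ is Cauchy in $L^p(X)$ and $v_j\to u$ $\mu$-a.e., Fatou's lemma gives $\Vert u-v_j\Vert_{L^p(X)}\le\sup_{m\ge j}\Vert v_m-v_j\Vert_{L^p(X)}\to 0$ and $u\in L^p(X)$. Put $\rho_j:=\sum_{i\ge j}g_i$, so $\Vert\rho_j\Vert_{L^p(X)}\le(C+1)2^{-(j-1)}\to 0$. I claim $\rho_j+\eps\rho$ is an upper gradient of $u-v_j$ for each $\eps>0$: on a curve meeting $E$ the upper gradient inequality is trivial because $\int_\gamma\rho\, ds=\infty$, while on a curve avoiding $E$ every point lies in $X\setminus E$, where $u-v_j=\sum_{i\ge j}w_i$ pointwise, and letting $n\to\infty$ in
\[
|(v_{n+1}-v_j)(\gamma(b))-(v_{n+1}-v_j)(\gamma(a))|\ \le\ \int_\gamma\sum_{i=j}^n g_i\, ds\ \le\ \int_\gamma\rho_j\, ds
\]
yields the inequality for $u-v_j$. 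Letting $\eps\downarrow 0$ gives $\Vert u-v_j\Vert_{N^{1,p}(X)}\le\Vert u-v_j\Vert_{L^p(X)}+\Vert\rho_j\Vert_{L^p(X)}\to 0$; in particular $u\in\widetilde{N^{1,p}}(X)$, so $u_{k_j}\to u$ in $N^{1,p}(X)$, and since $(u_k)_k$ is Cauchy it converges to $u$. Hence $N^{1,p}(X)$ is a Banach space.

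The only step that genuinely leaves measure theory --- and the main obstacle --- is the implication $\mu(E)=0\Rightarrow\Mod_p(\Gamma_E)=0$: one must control the approximating functions along individual curves, not merely $\mu$-a.e. This is exactly what the telescoping bound for $\sum_j|w_j(\gamma(\cdot))|$ accomplishes along any curve with $\int_\gamma\rho_0\, ds<\infty$, once an upper gradient of $u_{k_1}$ has been absorbed into $\rho_0$ (so that $u_{k_1}$, and hence every $v_j$, stays finite along such curves), together with the device of folding the $\mu$-null set where these controls can fail into a single $L^p$ weight that is $+\infty$ there. Everything else is routine bookkeeping with upper gradients and the triangle inequality.
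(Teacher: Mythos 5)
Your argument is correct and follows essentially the same route as the sources the paper cites for this lemma (\cite[Theorem~3.7]{Sh1}, \cite[Propositions~7.2.8, 7.3.1]{HKSTbook}) rather than any new path: the telescoping series of upper gradients $\rho_j=\sum_{i\ge j}g_i$, the absorption of the $\mu$-null set into a Borel $L^p$ weight that is $+\infty$ there, and the curvewise finiteness estimate along curves with $\int_\gamma\rho\,ds<\infty$ are exactly the standard devices, and they mirror the technique the paper itself uses in the proof of Lemma~\ref{lem:p-Except}. No gaps; the only point worth making explicit is that a parameter $a$ with $\gamma(a)\in A$ exists because $\mathcal H^1(\gamma_s^{-1}(N))=0$ while $\ell(\gamma)>0$, which you have implicitly.
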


\subsection{Construction of homogeneous Newton-Sobolev spaces $HN^{1,p}(X)$}

We set $D^{1,p}(X)$ to be the collection of all functions $f:X\to[-\infty,\infty]$ that have an upper gradient $g$ such that
$\int_Xg^p\, d\mu$ is finite, that is, $g$ is $p$-summable. Note that as constant functions have the identically-zero function
as an upper gradient, it follows that constant functions belong to $D^{1,p}(X)$, regardless of whether $\mu(X)$ is finite or infinite.
For $f\in D^{1,p}(X)$ we set
\[
\Vert f\Vert_{D^{1,p}(X)}=\inf_g\left(\int_X g^p\, d\mu\right)^{1/p},
\]
where the infimum is over all upper gradients $g$ of $f$. Note that $\Vert\cdot\Vert_{D^{1,p}(X)}$ is only a seminorm on $D^{1,p}(X)$,
as non-zero constant functions have their seminorm be zero. As in the previous subsection, we say that two functions $f_1,f_2\in D^{1,p}(X)$
are equivalent, $f_1\sim f_2$, if $\Vert f_1-f_2\Vert_{D^{1,p}(X)}=0$. 

\begin{lem}
Let $f_1,f_2\in D^{1,p}(X)$ such that $f_1\sim f_2$. Then there is a family $\Gamma$ of non-constant compact rectifiable curves in $X$
with $\Mod_p(\Gamma)=0$, such that whenever $\gamma$ is a non-constant compact rectifiable curves in $X$ with $\gamma\not\in\Gamma$,
we have that $f_1\circ\gamma=f_2\circ\gamma$. Moreover, $\Vert f_1\Vert_{D^{1,p}(X)}=\Vert f_2\Vert_{D^{1,p}(X)}$.
\end{lem}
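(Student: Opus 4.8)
The plan is to run the homogeneous analogue of the proof of Lemma~\ref{lem:p-Except}, together with the gluing argument used there to identify seminorms; the simplification is that here no bound on $\mu(\{f_1\ne f_2\})$ is available or needed, and what survives is precisely the curve statement. Set $h:=f_1-f_2\in D^{1,p}(X)$, so that $\Vert h\Vert_{D^{1,p}(X)}=0$ by hypothesis. For each $n\in\N$ I would pick an upper gradient $g_n$ of $h$ with $\int_X g_n^p\,d\mu<2^{-np}$ and put $\rho_0:=\sum_n g_n$; by Minkowski's inequality $\Vert\rho_0\Vert_{L^p(X)}\le\sum_n 2^{-n}<\infty$, so $\rho_0$ is a nonnegative $p$-summable Borel function. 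Let $\Gamma$ be the family of all non-constant compact rectifiable curves $\gamma$ in $X$ with $\int_\gamma\rho_0\,ds=\infty$; then $\Mod_p(\Gamma)=0$, with $\rho_0$ serving as the witness in the definition of $p$-modulus.

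Next I would analyse a non-constant compact rectifiable curve $\gamma\notin\Gamma$. Since the $g_n$ are nonnegative, $\sum_n\int_\gamma g_n\,ds=\int_\gamma\rho_0\,ds<\infty$, hence $\int_\gamma g_n\,ds\to 0$ as $n\to\infty$. Applying the upper gradient inequality for $g_n$ to every subcurve of $\gamma$ and letting $n\to\infty$ gives $h(\gamma(t))=h(\gamma(s))$ for all parameter values $s,t$, while the $\pm\infty$-convention built into the definition of an upper gradient, together with $\int_\gamma g_n\,ds<\infty$, rules out $h\circ\gamma$ attaining an infinite value. Thus $(f_1-f_2)\circ\gamma$ is a finite constant $c_\gamma$ along $\gamma$ (trivially so along constant curves), which is the first assertion.

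For the equality of the seminorms I would use the standard gluing device: if $g$ is any upper gradient of $f_1$ and $\eps>0$, then $g+\eps\rho_0$ is an upper gradient of $f_2$. Indeed, for $\gamma\in\Gamma$ one has $\int_\gamma(g+\eps\rho_0)\,ds=\infty$, so the upper gradient inequality for $f_2$ holds trivially; for a non-constant compact rectifiable $\gamma\notin\Gamma$ with $\int_\gamma g\,ds<\infty$, the convention forces $f_1\circ\gamma$ (hence $f_2\circ\gamma=f_1\circ\gamma-c_\gamma$) to be finite, and the increments of $f_2$ along $\gamma$ equal those of $f_1$, so they are bounded by $\int_\gamma g\,ds\le\int_\gamma(g+\eps\rho_0)\,ds$; the remaining cases ($\int_\gamma g\,ds=\infty$, or constant curves) are immediate. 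Taking the infimum over all upper gradients $g$ of $f_1$ in $\Vert f_2\Vert_{D^{1,p}(X)}\le\left(\int_X(g+\eps\rho_0)^p\,d\mu\right)^{1/p}\le\left(\int_X g^p\,d\mu\right)^{1/p}+\eps\Vert\rho_0\Vert_{L^p(X)}$ and then letting $\eps\downto 0$ gives $\Vert f_2\Vert_{D^{1,p}(X)}\le\Vert f_1\Vert_{D^{1,p}(X)}$. Since $-h=f_2-f_1$ has the same upper gradients as $h$, the same $\rho_0$ works with the roles of $f_1$ and $f_2$ interchanged, yielding the reverse inequality and hence equality.

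The only step needing genuine care is the bookkeeping with the $\pm\infty$-convention in the upper gradient inequality, both when concluding that $h\circ\gamma$ is finite and when verifying that $g+\eps\rho_0$ is an upper gradient of $f_2$; apart from that, the argument is a streamlined rerun of the proof of Lemma~\ref{lem:p-Except}. In fact the homogeneous case is cleaner: a single function $\rho_0$ handles all the exceptional curves at once, so one never has to invoke the countable subadditivity of $p$-modulus. The one conceptual point worth flagging is that, unlike in Lemma~\ref{lem:p-Except}, one neither obtains nor should expect $\{f_1\ne f_2\}$ to be $\mu$-null, since nonzero constant functions are $\sim$-equivalent to $0$; the curve statement is precisely the correct residue of that argument.
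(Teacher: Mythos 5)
Your argument is correct and follows the paper's proof in its overall architecture: the same choice of upper gradients $g_n$ of $f_1-f_2$ with $\int_X g_n^p\,d\mu<2^{-np}$, the same sum $\rho_0=\sum_n g_n\in L^p(X)$, and the same exceptional family of curves along which $\rho_0$ has infinite line integral. It diverges at one technical step, to its advantage: where the paper invokes Fuglede's theorem to produce a second exceptional family $\Gamma_F$ outside of which $\int_\gamma g_n\,ds\to 0$, and then takes the union of the two families, you observe that no second family is needed, since for any $\gamma$ with $\int_\gamma\rho_0\,ds<\infty$ monotone convergence along the arc-length parametrization gives $\sum_n\int_\gamma g_n\,ds=\int_\gamma\rho_0\,ds<\infty$, so $\int_\gamma g_n\,ds\to0$ automatically. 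This is a genuine (if small) simplification, available precisely because the exceptional family is already defined by divergence of the integral of the full sum $\rho_0$. For the equality of the seminorms the roles are reversed: the paper disposes of it in two lines by the triangle inequality $\Vert f_1\Vert_{D^{1,p}(X)}\le\Vert f_2\Vert_{D^{1,p}(X)}+\Vert f_1-f_2\Vert_{D^{1,p}(X)}$ (and symmetrically), relying on the subadditivity of upper gradients recorded earlier, whereas you re-derive this by hand via the gluing $g\mapsto g+\eps\rho_0$; your version is more self-contained and more careful about the $\pm\infty$ conventions, but proves nothing the shorter route does not. One shared caveat: like the paper's proof, yours establishes that $(f_1-f_2)\circ\gamma$ is a finite constant $c_\gamma$ for every $\gamma$ outside the exceptional family, not the literal conclusion $f_1\circ\gamma=f_2\circ\gamma$ as the statement reads (which cannot hold as written, e.g.\ for $f_2=f_1+1$); the constancy of the difference is clearly the intended content, and both proofs deliver exactly that.
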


\begin{proof}
Since $f_1\sim f_2$, for each $n\in\N$ we can find an upper gradient $g_n$ of $f_1-f_2$ such that $\int_Xg_n^p\, d\mu<2^{-np}$.
As before, let $\rho=\sum_ng_n$, and note that $\rho\in L^p(X)$. Let $\Gamma_b$ be the collection of all non-constant compact
rectifiable curves $\gamma$ in $X$ for which $\int_\gamma\rho\, ds=\infty$. 
It follows that 
$\Mod_p(\Gamma_b)=0$. 

 For ease of notation, we set $u:=f_1-f_2$.
Fix any $\gamma\not\in\Gamma_b$, say, $\gamma:[a,b]\to X$. Note that
as $\int_\gamma\rho\, ds<\infty$, it follows that for each $s,t\in[a,b]$ with $s<t$ we also have that $\gamma\vert_{[s,t]}\not\in\Gamma_b$.
Hence for each $n\in\N$ we have that
\begin{equation}\label{eq:A}
|u(\gamma(s))-u(\gamma(t))|\le \int_{\gamma\vert_{[s,t]}}g_n\, ds\le \int_\gamma g_n\, ds.
\end{equation}
A theorem of Fuglede~\cite{HaK, HKSTbook} yields that there is a family $\Gamma_F$ of non-constant compact rectifiable curves
with $\Mod_p(\Gamma_F)=0$ such that whenever $\gamma\not\in\Gamma_F$ is a non-constant compact rectifiable curve in $X$, we have
that 
\[
\lim_{n\to\infty}\int_\gamma g_n\, ds=0.
\]
Note that with $\Gamma=\Gamma_b\cup\Gamma_F$, we have $\Mod_p(\Gamma)=0$. So if $\gamma\not\in\Gamma$, then
by~\eqref{eq:A} we have that for each $s,t\in[a,b]$ with $s<t$,
\[
|u(\gamma(s))-u(\gamma(t))|\le 0.
\]
Thus $u\circ\gamma$ is constant for each $\gamma\not\in\Gamma$.

The last claim of the lemma follows from the triangle inequality for the seminorms: 
\[
\Vert f_1\Vert_{D^{1,p}(X)}\le \Vert f_2\Vert_{D^{1,p}(X)}+\Vert f_2-f_1\Vert_{D^{1,p}(X)}=\Vert f_2\Vert_{D^{1,p}(X)},
\]
and
\[
\Vert f_2\Vert_{D^{1,p}(X)}\le \Vert f_1\Vert_{D^{1,p}(X)}+\Vert f_2-f_1\Vert_{D^{1,p}(X)}=\Vert f_1\Vert_{D^{1,p}(X)}.
\]

\end{proof}

The \emph{homogeneous Newton-Sobolev space} $HN^{1,p}(X)$ is the collection of all equivalence classes of $D^{1,p}(X)$
under the equivalence relationship $\sim$, and we denote $HN^{1,p}(X):=D^{1,p}(X)/\sim$. Thanks to the above lemma, this
homogeneous space inherits the seminorm of $D^{1,p}(X)$ as a norm.
Note that in general, it is not true that $f_1\sim f_2$ implies that $f_1-f_2$ is a constant; for example, if $X$ has no non-constant
compact rectifiable curves, then all functions on $X$ are equivalent as zero function would be an upper gradient for it. This is in
stark contrast to $\widetilde{N^{1,p}}(X)$, where if $f_1$ is equivalent to $f_2$ with equivalence as defined in the previous subsection,
then $f_1=f_2$ almost everywhere. There are some circumstances under which $D^{1,p}(X)$ has such a regulated behavior, and
one such circumstance is that in which $X$ supports a $p$-Poincar\'e inequality.

\begin{remark}
We point out here that in some literature the term \emph{homogeneous Sobolev space} in relation to Euclidean spaces stands
in for the $HN^{1,p}$--norm-closure of the collection of all compactly supported smooth functions on Euclidean spaces,
see for instance~\cite{KY}. We do not consider this analog here, but it would be interesting to know what the relationship
between these two spaces are in general. Our definition of homogeneous spaces seem to be more intimately connected to
trace theorems for unbounded domains~\cite{GKS}.
\end{remark}

\subsection{Poincar\'e inequalities}

As in the previous subsections, we fix an index $p$ with $1\le p<\infty$. 
For measurable sets $A\subset X$ with $0<\mu(A)<\infty$  and $f:X\to\R$ a measurable function, we set
\[
\jint_Af\, d\mu:=\frac{1}{\mu(A)}\int_Af\, d\mu=:f_A.
\]

We say that the metric measure space $(X,d,\mu)$
supports a $p$-Poincar\'e inequality if there are constants $C>0$ and $\lambda\ge 1$ such that, whenever
$f$ is a measurable function on $X$ and $g$ is an upper gradient of $f$, and $B(x,r)$ is a ball in $X$, we have
\[
\jint_{B(x,r)}|f-f_B|\, d\mu\le C\, r\, \left(\jint_{B(x,\lambda r)}g^p\, d\mu\right)^{1/p}.
\]

Since we assume that balls in $X$ have positive and finite $\mu$-measure, it follows that $X$ is connected whenever
$(X,d,\mu)$ supports a $p$-Poincar\'e inequality. Under some additional assumptions (such as doubling property of $\mu$
and local compactness of $X$) it also follows that $X$ is quaisconvex, that is, there is a constant $C_q\ge 1$ such that whenever
$x,y\in X$ are two distinct points, there is a rectifiable curve $\gamma_{x,y}$ with end points $x$ and $y$ and with
length $\ell(\gamma_{x,y})\le C_q\, d(x,y)$, see for instance~\cite[Theorem~8.3.2]{HKSTbook} or~\cite{Chee, Kor}. 
In this subsection we will not assume that $\mu$ is doubling, and so the quasiconvexity
property is not necessarily true.

\begin{lem}
Suppose that $(X,d,\mu)$ supports a $p$-Poincar\'e inequality and $f_1,f_2\in D^{1,p}(X)$ such that $f_1\sim f_2$. Then there
exists $C=C_{f_1-f_2}\in\R$ such that $f_2=f_1+C$ $\mu$-a.e.~in $X$.
\end{lem}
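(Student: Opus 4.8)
\textbf{Proof plan.} Write $u := f_1 - f_2$. Since $f_1 \sim f_2$, we have $\Vert u\Vert_{D^{1,p}(X)} = 0$, so $u$ possesses $p$-integrable upper gradients of arbitrarily small $L^p$-norm; moreover $u$ is finite $\mu$-a.e. The plan is to prove that each bounded truncation $u_N := \max(-N,\min(u,N))$ is $\mu$-a.e.\ equal to a constant on all of $X$, and then to send $N\to\infty$.

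First I would fix $N\in\N$. Because the truncation map $t\mapsto\max(-N,\min(t,N))$ is $1$-Lipschitz, every upper gradient of $u$ is also an upper gradient of $u_N$; moreover $u_N$ is bounded, hence lies in $L^1(B)$ for every ball $B$ since $\mu(B)<\infty$. Given a ball $B=B(x_0,r)$ and $\eps>0$, I would pick an upper gradient $g$ of $u$ with $\int_Xg^p\,d\mu<\eps$ and apply the $p$-Poincar\'e inequality to $u_N$:
\[
\jint_B|u_N-(u_N)_B|\,d\mu \le C\,r\,\Bigl(\jint_{B(x_0,\lambda r)}g^p\,d\mu\Bigr)^{1/p} \le C\,r\,\Bigl(\frac{\eps}{\mu(B(x_0,\lambda r))}\Bigr)^{1/p}.
\]
Holding $B$ fixed and letting $\eps\downarrow 0$, the left-hand side (which does not depend on $g$) must vanish, so $u_N=(u_N)_B$ $\mu$-a.e.\ on $B$; that is, $u_N$ is $\mu$-a.e.\ constant on every ball of $X$.

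To globalize, I would fix a basepoint $x_0\in X$ and observe that for $k<k'$ the nested balls $B(x_0,k)\subset B(x_0,k')$ have positive measure, so the two a.e.-constant values of $u_N$ on them must coincide; call the common value $c_N$. Since $X=\bigcup_{k\in\N}B(x_0,k)$ (the metric being finite-valued) and a countable union of $\mu$-null sets is $\mu$-null, this gives $u_N=c_N$ $\mu$-a.e.\ on $X$. Finally, as $u$ is finite $\mu$-a.e.\ we have $u_N\to u$ pointwise $\mu$-a.e.\ as $N\to\infty$, whence $c_N\to C$ for some $C$ and $u=C$ $\mu$-a.e.\ on $X$; finiteness of $u$ together with $\mu(B(x_0,1))>0$ forces $C\in\R$. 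Thus $f_1-f_2=C$ $\mu$-a.e., and relabelling $-C$ as $C$ yields $f_2=f_1+C$ $\mu$-a.e., as desired.

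I expect the one delicate point to be the very applicability of the Poincar\'e inequality: its left-hand side presupposes local integrability of the function, which is not built into the definition of $D^{1,p}(X)$ and --- since $\mu$ is not assumed doubling in this subsection --- is not available from the usual embedding of Newtonian functions into $L^1_{\loc}$. The truncation in the second step is exactly what circumvents this obstacle; everything after it is routine bookkeeping.
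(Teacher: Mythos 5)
Your proof is correct and follows essentially the same route as the paper: apply the $p$-Poincar\'e inequality to $u=f_1-f_2$ together with upper gradients of arbitrarily small $L^p$-norm to conclude that $u$ is $\mu$-a.e.\ constant on each ball, then exhaust $X$ by concentric balls. Your additional truncation step $u_N=\max(-N,\min(u,N))$ handles the local-integrability/finiteness issue that the paper's proof silently glosses over (it applies the Poincar\'e inequality to $u$ directly), so your version is, if anything, slightly more careful.
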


\begin{proof}
Suppose that $f_1\sim f_2$. Set $u:=f_1-f_2$. Then as $\Vert u\Vert_{D^{1,p}(X)}=0$, for each $n\in\N$ we can find an upper 
gradient $g_n$ of $u$ such that $\int_Xg_n^p\, d\mu\le 2^{-np}$.

We fix a point $x_0\in X$ and $R>0$ and denote $B=B(x_0,R)$. Then by the Poincar\'e inequality, we have that 
\[
\jint_B|u-u_B|\, d\mu\le C\, R\, \left(\jint_{B(x_0,\lambda R)}g_n^p\, d\mu\right)^{1/p}\le \frac{C\, R}{\mu(B(x_0,\lambda R))^{1/p}}\, 2^{-n}.
\]
Letting $n\to\infty$ we see that we must have $u=u_B$ $\mu$-a.e.~in $B$. 

Now, exhausting $X$ by replacing $B$ by $B(x_0,kR)$ for each $k\in\N$ and applying the above argument, we now conclude that
$u$ is to be constant $\mu$-a.e.~in $X$; choosing $C$ to be that constant completes the proof of the lemma.
\end{proof}

The following theorem was proved in~\cite{HaK}.

\begin{thm}[{\cite[Theorem~5.1]{HaK}}]\label{thm:p-p}
If the measure $\mu$ is doubling and supports a $p$-Poincar\'e inequality, then there are constants $C_0>0$ and $\lambda_0\ge 1$ such that
whenever $f:X\to[-\infty,\infty]$ is a measurable function with upper gradient $g\in L^p(X)$ and $B(x,r)$ is a ball in $X$, then we have
\[
\left(\jint_{B(x,r)}|f-f_B|^p\, d\mu\right)^{1/p}\le C_0\, r\, \left(\jint_{B(x,\lambda_0 r)}g^p\, d\mu\right)^{1/p}.
\]
\end{thm}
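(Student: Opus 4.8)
The plan is to prove the self-improvement by passing through a pointwise telescoping estimate and a Sobolev-type embedding for a restricted maximal operator, and only at the very end to descend to the asserted $L^p$-average on the left via Jensen's inequality. Note first that the conclusion is genuinely stronger than the hypothesis on the left-hand side: since $\jint_B\,d\mu$ is a probability measure, $\jint_B|f-f_B|\,d\mu\le(\jint_B|f-f_B|^p\,d\mu)^{1/p}$, so the assumed $p$-Poincar\'e inequality controls only the $L^1$-average while the target controls the full $L^p$-average. Throughout I may assume $X$ is connected (which holds under a $p$-Poincar\'e inequality, as remarked above), so that doubling yields a reverse-doubling lower mass bound: there are $s>0$ and $c>0$ with $\mu(B(x,t))/\mu(B(x,\tau))\ge c\,(t/\tau)^s$ whenever $0<t\le\tau$.

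First I would establish the pointwise estimate. Fix $B=B(x_0,r)$ and $x\in B$, and set $B_i=B(x,2^{1-i}r)$ for $i\in\N\cup\{0\}$, so that $B\subset B_0$. Because $\mu$ is doubling, the Lebesgue differentiation theorem holds, hence $f_{B_i}\to f(x)$ for $\mu$-a.e.\ $x$. Writing $f(x)-f_{B_0}=\sum_{i\ge0}(f_{B_{i+1}}-f_{B_i})$ and using $|f_{B_{i+1}}-f_{B_i}|\le \tfrac{\mu(B_i)}{\mu(B_{i+1})}\jint_{B_i}|f-f_{B_i}|\,d\mu$, doubling together with the assumed $p$-Poincar\'e inequality applied on each $B_i$ gives $|f_{B_{i+1}}-f_{B_i}|\le C\,2^{1-i}r\,(\jint_{\lambda B_i}g^p\,d\mu)^{1/p}$. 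Bounding each average of $g^p$ by the restricted maximal function $M_R(g^p)(x):=\sup_{0<t\le R}\jint_{B(x,t)}g^p\,d\mu$ with $R=2\lambda r$, summing the geometric series $\sum_i 2^{1-i}r$, and comparing $f_{B_0}$ with $f_B$ by the same device, yields
\begin{equation}\label{eq:ptwise}
|f(x)-f_B|\le C\,r\,\bigl(M_R(g^p\chi_{\lambda_0 B})(x)\bigr)^{1/p}\qquad\text{for }\mu\text{-a.e.\ }x\in B,
\end{equation}
where $\lambda_0\ge1$ depends only on $\lambda$.

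The main obstacle is to integrate~\eqref{eq:ptwise} without losing the gain in exponent. Raising~\eqref{eq:ptwise} to the power $p$ and integrating would require $\jint_B M_R(g^p\chi_{\lambda_0 B})\,d\mu\le C\,\jint_{\lambda_0 B}g^p\,d\mu$, which is false, since the first-order maximal operator is of weak type $(1,1)$ but not bounded on $L^1$; integrating its distribution function produces only a divergent logarithmic term. The resolution, which is the heart of the self-improvement, is that the reverse-doubling lower mass bound endows $M_R$ with the character of a fractional integral and hence improves integrability. Concretely, I would run the Maz'ya truncation method, proving a corresponding estimate for the truncations $f^{b}_{a}:=\min\{\max\{f-a,0\},\,b-a\}$ and summing over dyadic levels; combined with the weak-type bound for $M_R$ and the reverse-doubling decay, this produces the Sobolev--Poincar\'e inequality
\[
\left(\jint_B|f-f_B|^{q}\,d\mu\right)^{1/q}\le C_0\,r\,\left(\jint_{\lambda_0 B}g^p\,d\mu\right)^{1/p}
\]
for some exponent $q>p$, where one may take $q=\tfrac{sp}{s-p}$ when $p<s$ and any finite $q>p$ when $p\ge s$.

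Finally, since $q>p$ and $\jint_B\,d\mu$ is a probability measure, the monotonicity of $L^p$-norms gives $(\jint_B|f-f_B|^p\,d\mu)^{1/p}\le(\jint_B|f-f_B|^{q}\,d\mu)^{1/q}$, and the displayed Sobolev--Poincar\'e inequality then delivers exactly the asserted inequality with the same constants $C_0$ and $\lambda_0$. I expect the entire difficulty to be concentrated in the integrability-improvement step: the telescoping estimate~\eqref{eq:ptwise} and the closing Jensen step are routine once doubling (for the Lebesgue differentiation theorem and the chaining) and reverse doubling (for the fractional-integral gain) are in hand.
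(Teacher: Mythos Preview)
The paper does not supply a proof of this theorem; it is quoted verbatim from Haj\l asz--Koskela and invoked as a black box, so there is no in-paper argument to compare against. Your sketch is, in fact, an outline of the very argument in the cited reference: the telescoping/chaining pointwise bound in terms of a restricted maximal function of $g^p$, the observation that naive integration fails because $M$ is only weak $(1,1)$, the upgrade to a strong $(q,p)$-Sobolev--Poincar\'e inequality via the Maz'ya truncation trick, and the final descent to $(p,p)$ by monotonicity of $L^q$-averages. As a plan it is sound.

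Two small points. First, the inequality you write, $\mu(B(x,t))/\mu(B(x,\tau))\ge c\,(t/\tau)^s$ for $0<t\le\tau$, is the upper mass bound that follows directly from doubling (with $s=\log_2 C_d$); it is not what is usually called \emph{reverse} doubling, and connectedness is not needed for it. This is indeed the exponent that drives the Sobolev gain, so your use of it is correct even if the name is off. Second, the truncation step relies on the fact that a truncation $f^b_a$ admits $g\,\chi_{\{a<f<b\}}$ (not merely $g$) as a $p$-weak upper gradient, so that the level sets carry disjoint pieces of $\int g^p$ when you sum; you gesture at this but it is the one place where something beyond doubling and the raw Poincar\'e hypothesis enters, and in a fully written proof it deserves a line of justification.
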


\section{Banach space property of $HN^{1,p}(X)$}

The goal of this section is to provide a proof of Theorem~\ref{thm:main}. We start with the following useful tool.

\begin{lem}\label{lem:useful1}
Suppose the hypotheses of Theorem~\ref{thm:main} hold.
Let $u\in HN^{1,p}(X)$ with upper gradient $g$, and let $x_0\in X$. If $0<R_0<R<\infty$, then setting $B_0=B(x_0,R_0)$, we have 
\[
\left(\jint_{B(x_0,R)}|u-u_{B_0}|^p\, d\mu\right)^{1/p}
\le C_0\, \left(1+\left(\frac{\mu(B(x_0,R))}{\mu(B_0)}\right)^{1/p}\right)\, R\, \left(\jint_{B(x_0,\lambda R)}g^p\, d\mu\right)^{1/p}.
\]
\end{lem}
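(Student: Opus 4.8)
The plan is to compare the average $u_{B_0}$ over the small ball with the average $u_{B(x_0,R)}$ over the large ball, and then to apply the Poincar\'e inequality of Theorem~\ref{thm:p-p} on the large ball. Concretely, I would begin with the triangle inequality in $L^p(B(x_0,R),\mu)$:
\[
\left(\jint_{B(x_0,R)}|u-u_{B_0}|^p\, d\mu\right)^{1/p}
\le \left(\jint_{B(x_0,R)}|u-u_{B(x_0,R)}|^p\, d\mu\right)^{1/p}
+ |u_{B(x_0,R)}-u_{B_0}|,
\]
noting that the term $|u_{B(x_0,R)}-u_{B_0}|$ is a constant and so its $L^p$-average over $B(x_0,R)$ is just itself.

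Next I would bound the first term directly by Theorem~\ref{thm:p-p}, which gives
\[
\left(\jint_{B(x_0,R)}|u-u_{B(x_0,R)}|^p\, d\mu\right)^{1/p}
\le C_0\, R\, \left(\jint_{B(x_0,\lambda_0 R)}g^p\, d\mu\right)^{1/p},
\]
and then estimate the constant term. Since $B_0\subset B(x_0,R)$, for any $\mu$-a.e. point the difference of averages satisfies
\[
|u_{B(x_0,R)}-u_{B_0}|
= \left|\jint_{B_0}\bigl(u_{B(x_0,R)}-u\bigr)\, d\mu\right|
\le \jint_{B_0}|u-u_{B(x_0,R)}|\, d\mu
\le \frac{\mu(B(x_0,R))}{\mu(B_0)}\,\jint_{B(x_0,R)}|u-u_{B(x_0,R)}|\, d\mu,
\]
where the last step just enlarges the domain of integration and divides by the smaller measure. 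By H\"older's inequality the final average is at most $\left(\jint_{B(x_0,R)}|u-u_{B(x_0,R)}|^p\, d\mu\right)^{1/p}$, which is again controlled by Theorem~\ref{thm:p-p}. Combining the two estimates produces the factor $1+\mu(B(x_0,R))/\mu(B_0)$, not the claimed $1+(\mu(B(x_0,R))/\mu(B_0))^{1/p}$, so the one genuine subtlety is to obtain the sharper power. This is gained by applying the $L^p$-version of the Poincar\'e inequality directly in the bound for $|u_{B(x_0,R)}-u_{B_0}|$: estimating $\jint_{B_0}|u-u_{B(x_0,R)}|\,d\mu$ by H\"older on $B_0$ gives the factor $(\mu(B(x_0,R))/\mu(B_0))^{1/p}$ in front of $\left(\jint_{B(x_0,R)}|u-u_{B(x_0,R)}|^p\,d\mu\right)^{1/p}$, rather than the full ratio.

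The remaining bookkeeping is to absorb $\lambda_0$ into the statement's $\lambda$ (one may simply declare $\lambda:=\lambda_0$, or note monotonicity of $r\mapsto r^{p}\mu(B(x_0,\lambda r))\jint_{B(x_0,\lambda r)}g^p\,d\mu$ is not even needed since the right-hand side only grows when $\lambda$ increases) and to collect the two occurrences of $C_0\,R\,\bigl(\jint_{B(x_0,\lambda_0 R)}g^p\,d\mu\bigr)^{1/p}$, one with coefficient $1$ and one with coefficient $(\mu(B(x_0,R))/\mu(B_0))^{1/p}$. Since the doubling hypothesis is already in force, $\mu(B(x_0,R))/\mu(B_0)$ is finite, so all quantities are well-defined; doubling is not otherwise needed beyond guaranteeing Theorem~\ref{thm:p-p} applies. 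I expect no real obstacle here — the only point requiring care is routing the constant-difference term through the $L^p$ Poincar\'e inequality (Theorem~\ref{thm:p-p}) rather than the weaker $L^1$ form, which is exactly what yields the exponent $1/p$ on the measure ratio.
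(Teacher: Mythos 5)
Your proposal is correct and follows essentially the same route as the paper: the triangle inequality splitting off the constant $|u_{B(x_0,R)}-u_{B_0}|$, then H\"older on $B_0$ followed by enlarging the domain of integration inside the $p$-th power to produce the factor $\left(\mu(B(x_0,R))/\mu(B_0)\right)^{1/p}$, and finally Theorem~\ref{thm:p-p} applied on $B(x_0,R)$. The refinement you identify at the end (routing the average-difference term through the $L^p$ form rather than the $L^1$ form) is exactly the step the paper uses.
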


\begin{proof}
We have that 
\[
\left(\jint_{B(x_0,R)}|u-u_{B_0}|^p\, d\mu\right)^{1/p}\le \left(\jint_{B(x_0,R)}|u-u_{B(x_0,R)}|^p\, d\mu\right)^{1/p}+|u_{B(x_0,R)}-u_{B_0}|.
\]
Note that
\begin{align*}
|u_{B(x_0,R)}-u_{B_0}|
 \le \jint_{B_0}|u-u_{B(x_0,R)}|\, d\mu
 \le & \left(\jint_{B_0}|u-u_{B(x_0,R)}|^p\, d\mu\right)^{1/p}\\
 \le & \left(\frac{\mu(B(x_0,R))}{\mu(B_0)}\jint_{B(x_0,R)}|u-u_{B(x_0,R)}|^p\, d\mu\right)^{1/p}.
\end{align*}
Now an application of Theorem~\ref{thm:p-p} yields the claim of the lemma.
\end{proof}

Now we are ready to prove Theorem~\ref{thm:main}.

\begin{proof}[Proof of Theorem~\ref{thm:main}]
Let $(u_k)_k$ be a Cauchy sequence in $HN^{1,p}(X)$, and we fix a ball $B_0=B(x_0,R_0)$. With $u_k$ also denoting any choice of
representative function from the equivalence class $u_k\in HN^{1,p}(X)$ for each $k\in\N$, we set the function $v_k$ by
\[
v_k:=u_k-(u_k)_{B_0},
\]
where $(u_k)_{B_0}=\jint_{B_0}u_k\, d\mu$ is the mean of $u_k$ on the ball $B_0$. Note that the mean of $v_k$ on the same ball $B_0$
is zero, i.e., $(v_k)_{B_0}=0$. Note that for each $k\in\N$, $v_k$ belongs to the same equivalence class as $u_k$, that is, as far as
$HN^{1,p}(X)$ is concerned, $v_k=u_k$. 

As $(v_k)_k$ is Cauchy in $HN^{1,p}(X)$, we know that for each $m,n\in\N$ with $m>n$, there is an upper gradient $g_{v_m-v_n}$ of
$v_m-v_n$ such that $\lim_{m,n\to\infty}\int_Xg_{v_m-v_n}^p\, d\mu=0$. Therefore, by passing to a subsequence if necessary, we may
assume that for each $k\in\N$ we have $\int_Xg_{v_{k+1}-v_k}^p\, d\mu<2^{-kp}$. Note that if we can show the convergence of this subsequence
in $HN^{1,p}(X)$, then the entire Cauchy sequence also converges.

We fix $R>R_0$, and consider the sequence of \emph{functions} $v_k\in L^p(B(x_0,R))$. By an application of Lemma~\ref{lem:useful1},
we know that 
\begin{align*}
\int_{B(x_0,R)}|v_{k+1}-v_k|^p\, d\mu&\le C_0^p\, \frac{R^p}{\mu(B(x_0,R))}\, \left(1+\left(\frac{\mu(B(x_0,R))}{\mu(B_0)}\right)^{1/p}\right)^p\, \int_Xg_{v_{k+1}-v_k}^p\, d\mu\\
  &\le C_0^p\, \frac{R^p}{\mu(B(x_0,R))}\, \left(1+\left(\frac{\mu(B(x_0,R))}{\mu(B_0)}\right)^{1/p}\right)^p\, 2^{-kp}.
\end{align*}
Setting 
\[
A(R):=C_0\, \frac{R}{\mu(B(x_0,R))^{1/p}}\, \left(1+\left(\frac{\mu(B(x_0,R))}{\mu(B_0)}\right)^{1/p}\right),
\]
we now have that
\[
\Vert v_{k+1}-v_{k}\Vert_{L^p(B(x_0,R))}\le A(R)\, 2^{-k}.
\]
It follows that the sequence $(v_k)_k$ is Cauchy in $L^p(B(x_0,R))$, and hence converges to a function $v_R$ there. Moreover, by the 
above inequality, we also know that this convergence is also \emph{pointwise $\mu$-a.e.~in} the ball $B(x_0,R)$.
Since this happens for each $R>R_0$, it follows that there is a function $v\in L^p_{loc}(X)$ with $v_k\to v$ both in
$L^p_{loc}(X)$ and $\mu$.a.e.~in $X$.

It only remains to show that (a) there is a function $\widehat{v}\in D^{1,p}(X)$ with $\widehat{v}=v$ $\mu$-a.e.~in $X$,
and that (b) $v_k\to \widehat{v}$ in $HN^{1,p}(X)$. To do so, note that $(v_k\vert_{B(x_0,R)})_k$ is a Cauchy 
sequence in $N^{1,p}(B(x_0,R))$, thanks to Lemma~\ref{lem:useful1} and the condition on the choices $g_{u_m-u_n}$
which are upper gradients of $v_m-v_n$ on $B(x_0,R)$ as well. Indeed, this sequence also satisfies the second hypothesis
of Lemma~\ref{lem:ptwise-Banach} with the entire sequence being considered as its own subsequence. Thus, by
Lemma~\ref{lem:ptwise-Banach}, we know that there is a function $w_R\in N^{1,p}(B(x_0,R))$ such that
$v_k\to w_R$ in $N^{1,p}(B(x_0,R))$. As the $L^p(B(x_0,R))$-norm is part of the $N^{1,p}(B(x_0,R))$-norm,
it follows that $w_R=v$ $\mu$-a.e.~in $B(x_0,R)$. Moreover, this lemma also tells us that $v_k\to w_R$ pointwise in
$B(x_0,R)\setminus E[R]$ for some $p$-exceptional set $E[R]$. Here we take $E[R]$ to be the set of points $x$ in
$B(x_0,R)$ for which $(v_k(x))_k$ is not a Cauchy sequence; thus, we also have that $E[R_2]\cap B(x_0,R_1)=E[R_1]$
whenever $R_2>R_1>R_0$. Hence, when $R_0<R_1<R_2$, we also have that $w_{R_2}=w_{R_1}$ on $B(x_0,R_1)$,
and so we can find a globally defined function $\widehat{v}$ so that
the restriction of $\widehat{v}$ to $B(x_0,R)$ is $w_R$.

For each $k\in\N$, note that 
$\sum_{j=k}^\infty g_{v_{j+1}-v_j}$ is an upper gradient of $v_l-v_k$ in $X$
for each $l\in\N$ with $l>k$. It follows that $\sum_{j=k}^\infty g_{v_{j+1}-v_j}$ is an upper gradient
of $\widehat{v}-v_k$ in $B(x_0,R)$ for each $R>R_0$. 
It now follows from the fact that $v_k\in D^{1,p}(X)$ that $\widehat{v}\in D^{1,p}(X)$, that is, $\widehat{v}$ belongs
to an equivalence class in $HN^{1,p}(X)$.

As
\[
\Vert \sum_{j=k}^\infty g_{v_{j+1}-v_j}\Vert_{L^p(X)}\le \sum_{j=k}^\infty \Vert g_{v_{j+1}-v_j}\Vert_{L^p(X)}
<2^{1-k},
\]
it follows that 
\[
\Vert \widehat{v}-v_k\Vert_{D^{1,p}(X)}<2^{1-k},
\]
that is, $v_k\to \widehat{v}$ in $D^{1,p}(X)$ and hence in $HN^{1,p}(X)$. This completes the proof of the first part of the theorem.

To show prove the last claim of the theorem when $1<p<\infty$, we need to show that bounded sequences in there have a convex combination
subsequence that converges to some function in $HN^{1,p}(X)$. To this end, let $(v_k)_k$ be a representative sequence from 
a bounded sequence in $HN^{1,p}(X)$. We can, by replacing $v_k$ with $v_k-(v_k)_{B_0}$ if necessary, assume that
each $v_k$ has mean $(v_k)_{B_0}=0$. Then, from Lemma~\ref{lem:useful1} and by the $p$-Poincar\'e inequality, we know that
for each $R_0<R<\infty$ the sequence $(v_k)_k$ is bounded in $N^{1,p}(B(x_0,R))$. By~\cite[Theorem~7.3.12]{HKSTbook},
which uses only the reflexivity property of the spaces $L^p$, we know that $N^{1,p}(B(x_0,R))$ satisfies
the conclusion of Mazur's lemma. Therefore
it follows that there is a convex-combination subsequence $w_{R,j}=\sum_{k=j}^{N_j}\, \lambda_{j,k}\, v_k$ with $0\le \lambda_{j,k}\le 1$
for each $k=j,\cdots, N_j$ and $\sum_{k=j}^{N_j}\lambda_{j,k}=1$, and a function $w_R\in N^{1,p}(B(x_0,R))$, such that
$w_{R,j}\to w_R$ in $N^{1,p}(B(x_0,R))$. 

Now choosing a strictly monotone increasing sequence of radii $R_k$ with $\lim_kR_k=\infty$, and for each $k\ge 2$ making sure
to consider the sequence $(w_{R_{k-1},j})_j$ on $B(x_0,R_j)$, and utilizing a Cantor-type diagonalization argument, we obtain
the function $w$. Note that for each $k,j$, the function $w_{R_{k-1},j}$ has average $0$ on the ball $B_0$.
As in the previous paragraph, we see that $w_R$ can be chosen to
be independent of $R$, and so we obtain a consistent globally defined function $w$. This completes the proof.
\end{proof}


\begin{thebibliography}{A}
\frenchspacing
\bibitem{BCS} S. Barile, S. Cingolani, S. Secchi:
\emph{Single-peaks for a magnetic Schr\"odinger equation with critical growth.}
Adv. Differential Equations {\bf 11} (2006), no. 10, 1135--1166.
\bibitem{Chee}J. Cheeger:
\emph{Differentiability of Lipschitz functions on metric measure spaces}
Geom. Funct. Anal. {\bf 9} (1999), 428--517.
\bibitem{EKN} S. Eriksson-Bique, P. Koskela, K. Nguyen:
\emph{On limits at infinity of weighted Sobolev functions.}
J. Funct. Anal. {\bf 283} (2022), no. 10, Paper No. 109672, 40 pp.
\bibitem{F} W.~G. Faris:
\emph{Weak Lebesgue spaces and quantum mechanical binding.}
Duke Math. J. {\bf 43} (1976), no. 2, 365--373.
\bibitem{GKS} R. Gibara, R. Korte, N. Shanmugalingam:
\emph{Solving a Dirichlet problem for unbounded domains via a conformal transformation.}
Math. Ann. (2023) {\tt https://link.springer.com/article/10.1007/s00208-023-02705-8}
\bibitem{HaK} P. Haj\l asz, P. Koskela:
\emph{Sobolev met Poincar\'e.}
Mem. Amer. Math. Soc. {\bf 145} (2000), no. 688, x+101 pp. 
\bibitem{Hei} J. Heinonen:
\emph{Lecture notes on analysis in metric spaces.}
Springer Universitext, Springer Verlag New York (2001).
\bibitem{HK} J. Heinonen, P. Koskela:
\emph{Quasiconformal maps in metric spaces with controlled geometry.} 
Acta Math. {\bf 181} (1998), 1--61.
\bibitem{HKSTbook} J. Heinonen, P. Koskela, N. Shanmugalingam, J. Tyson:
\emph{Sobolev spaces on metric measure spaces. An approach based on upper gradients.}
New Mathematical Monographs, {\bf 27}, Cambridge University Press, Cambridge, 2015. xii+434 pp.
\bibitem{Holo} I. Holopainen:
\emph{Volume growth, Green's functions, and parabolicity of ends.} 
Duke Math. J. {\bf 97} (2) (1999), 319--346.
\bibitem{HoloKos} I. Holopainen, P. Koskela:
\emph{Volume growth and parabolicity.} 
Proc. Am. Math. Soc. {\bf 129} (11) (2001) 3425--3435.
\bibitem{Kor} R. Korte:
\emph{Geometric implications of the poincare inequality.}
Results Math. {\bf 50} (2007), 1-2, 93--107.
\bibitem{KKN} J. Kline, P. Koskela, K. Nguyen:
\emph{Large-scale behaviour of Sobolev functions in Ahlfors regular metric measure spaces.}
preprint (2023) {\tt https://arxiv.org/abs/2310.11718}
\bibitem{KN} P. Koskela, K. Nguyen:
\emph{Existence and uniqueness of limits at infinity for homogeneous Sobolev functions.}
J. Funct. Anal. {\bf 285} (2023), no. 11, Paper No. 110154, 22 pp.
\bibitem{KNW} P. Koskela, K. Nguyen, Z. Wang:
\emph{Trace and density results on regular trees.}
Potential Anal. {\bf 57} (2022), no. 1, 101--128. 
\bibitem{KY} H. Kozono, M. Yamazaki:
\emph{Exterior problem for the stationary Navier-Stokes equations in the Lorentz space.}
Math. Ann. {\bf 310} (1998), no. 2, 279--305.
\bibitem{Maz} V.~G. Mazya:
\emph{Sobolev spaces with applications to elliptic partial differential equations.}
Second, revised and augmented edition. 
Grundlehren der mathematischen Wissenschaften, 
{\bf 342} Springer, Heidelberg, 2011. xxviii+866 pp.
\bibitem{N} D. Nandi:
\emph{A density result for homogeneous Sobolev spaces.}
Nonlinear Anal. {\bf 213} (2021), Paper No. 112501, 32 pp.
\bibitem{Sh1} N. Shanmugalingam:
\emph{Newtonian spaces: an extension of Sobolev spaces to metric measure spaces.}
Rev. Mat. Iberoamericana {\bf 16} (2000), no. 2, 243--279.
\bibitem{Vaisala} J. V\"ais\"al\"a:
\emph{Lectures on $n$-dimensional Quasiconformal Mapping.}
Lecture notes in Mathematics, Springer-Verlag. {\bf 229} (1971).
\end{thebibliography}
\end{document}